\newtheorem{theorem}{Theorem}[section]
\theoremstyle{definition}
\theoremstyle{remark}
\numberwithin{equation}{section}
\begin{document}

\setcounter{page}{1}
\title[]{A necessary and sufficient criterion for the existence of ratio limits of sequences generated by linear recurrences}
\author{Igor Szczyrba}
\address{School of Mathematical Sciences\\
                University \!of Northern Colorado\\
                Greeley CO 80639, U.S.A.}
\email{igor.szczyrba@unco.edu}

\begin{abstract}
We introduce a necessary and sufficient criterion for determining the existence and the values of ratio limits of complex sequences generated by arbitrary linear recurrences. 
\end{abstract}
 
\maketitle

\section{Introduction}

Sloane's Online Encyclopedia of Integer Sequences \cite{Sloane} and Khovanova's website \cite{Khovanova} catalog thousands of integer sequences generated by linear recurrences that are associated with problems in various branches of mathematics and other sciences, such as number theory, abstract algebra, linear algebra, combinatorics, complex numbers, group theory, probability, statistics, affine geometry, electrical networks, infectious diseases, etc.,  cf.\  \cite{Azarian,  Balof, Baltic, Cahill, Chaffin, Dubeau1, Dubeau4, Dubeau3, Falcon,  Guo, Huang, Janjic, Linton, Noe, Rajan, Shattuck, Szczyrba1, Szczyrba2, Szczyrba3}. 

The asymptotic behavior of sequences generated by linear recurrences is characterized by the ratio limit of the sequence's consecutive terms. The knowledge on whether a ratio limit exists is necessary if a problem  requires considering the sequence's terms with higher and higher indices.  
The existence of a ratio limit and its value depend on the choice of the sequence's initial conditions. 

In 1997 Dubeau et al.\ \cite{Dubeau} studied linear recurrences $F\in L(\mathbb{C}^n,\mathbb{C}^n)$ with asymptotically simple characteristic polynomials 
\begin{equation}
P=\lambda^{n}-b_1\lambda^{n-1}-\cdots-b_{n}, \quad b_n\!\neq\!0.
\end{equation}\label{10}
A polynomial is asymptotically simple iff among its zeros of maximal modulus there is a dominant zero $\lambda_0$ of maximal multiplicity.
  
Dubeau et al.\ derived {\em a sufficient\,} criterion for the existence of ratio limits of sequences $\big(F^{\bf a}_k\big)_{k=-n+1}^\infty$ generated by $F$ from complex initial conditions ${\bf a}=(a_{-n+1},\dots,a_0)$. 
Specifically, the authors showed that if  

\begin{equation}
a_0\lambda_0^{n-1}+\sum_{i=1}^{n-1}a_{-i}\sum^{n-i}_{j=1}b_{i+j}\lambda_0^{n-j-1}\ne0,\label{14} 
\end{equation}

\begin{equation}
\text{then}\quad\lim_{k_0<k\to\infty}\frac{F^{\bf a}_{k+1}}{F^{\bf a}_k}=\lambda_0,\,\,\,F^{\bf a}_k\ne0\,\,\,\text{for}\,\,\,k>k_0,\label{12}
\end{equation} 

\begin{equation}
\text{where}\,\,\,  
  F^{\bf a}_k=a_k\,\,\,\text{if}\,\,-n+1\le k\le0\,\,\,\text{and}\,\,\, F^{\bf a}_k = b_1F^{\bf a}_{k-1}+\cdots + b_nF^{\bf a}_{k-n}\,\,\,\text{if}\,\,k>0. \label{11}
\end{equation}
Condition \eqref{14} is satisfied, in particular, by all sequences generated by linear recurrences with asymptotically simple characteristic polynomials from initial conditions $(0,\dots,0,a_0)$. 

An example of a sequence that does not satisfy condition \eqref{14}, but has the ratio limit, is the constant sequence $(1_k)_{k=-n+1}^\infty$ generated by
the linear recurrence with the signature $(2,-1)$ from the initial conditions $(1, 1)$. The corresponding asymptotically simple characteristic polynomial $P=(\lambda-1)^2$\!.

We generalize results obtained by Dubeau et al.\ by introducing {\em a necessary and sufficient\,} criterion for the existence of ratio limits of complex sequences generated by linear recurrences with {\em arbitrary\,} characteristic polynomials $P$. We also prove that if the ratio limit exists, it must be equal to one of the zeros of $P$. 

\section{Main results}

Given a linear recurrence $F\in L(\mathbb{C}^n,\mathbb{C}^n)$ of an order $n$ with the signature $(b_1,\dots,b_n)$, where $b_n\!\neq\!0$. A sequence $(F^{\bf a}_k)_{k=-n+1}^\infty=F^{\bf a}$ generated  by formulas \eqref{11} is called {\em a solution of $F$}. 
 
\smallskip \begin{theorem}\label{A}
 If a solution $F^{\bf a}$ of $F$ generated from initial conditions ${\bf a}\in\mathbb{C}^n$ has a ratio limit 

\begin{equation}
\lim_{k_0<k\to\infty} \frac{F^{\bf a}_{k+1}}{F^{\bf a}_k}=\Psi,\quad{\rm where}\quad F^{\bf a}_k\ne0\quad{\rm for}\quad  k>k_0,\label{22}
\end{equation}
then $\Psi$ is equal to one of the zeros of the characteristic polynomial $P$ of $F$.

\end{theorem}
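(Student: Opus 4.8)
The plan is to read the characteristic equation off directly from the recurrence by normalizing and passing to the limit. Fix attention on indices $k > k_0 + n$, so that by hypothesis \eqref{22} each of the $n+1$ consecutive terms $F^{\mathbf a}_k, F^{\mathbf a}_{k-1},\dots,F^{\mathbf a}_{k-n}$ is nonzero. The crucial choice is to divide the recurrence \eqref{11} not by the most recent term but by the \emph{oldest} one, $F^{\mathbf a}_{k-n}$, obtaining
\[
\frac{F^{\mathbf a}_k}{F^{\mathbf a}_{k-n}} = b_1\,\frac{F^{\mathbf a}_{k-1}}{F^{\mathbf a}_{k-n}} + \cdots + b_{n-1}\,\frac{F^{\mathbf a}_{k-n+1}}{F^{\mathbf a}_{k-n}} + b_n.
\]

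Next I would observe that each quotient $F^{\mathbf a}_{k-j}/F^{\mathbf a}_{k-n}$ with $0\le j\le n-1$ telescopes into a product of $n-j$ consecutive ratios $F^{\mathbf a}_{m+1}/F^{\mathbf a}_m$, every one of which tends to $\Psi$ by \eqref{22}. The product rule for limits then gives $F^{\mathbf a}_{k-j}/F^{\mathbf a}_{k-n}\to\Psi^{\,n-j}$, and in particular $F^{\mathbf a}_k/F^{\mathbf a}_{k-n}\to\Psi^{\,n}$. Letting $k\to\infty$ in the displayed identity therefore yields $\Psi^n = b_1\Psi^{n-1}+\cdots+b_{n-1}\Psi+b_n$, which is exactly $P(\Psi)=0$.

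The one genuine subtlety --- and the reason for normalizing by $F^{\mathbf a}_{k-n}$ --- is the case $\Psi=0$. Dividing instead by $F^{\mathbf a}_{k-1}$ would express the quotients through \emph{inverse} ratios tending to $\Psi^{-1}$, which is meaningless when $\Psi=0$ and would demand a separate argument. With the present normalization no such difficulty arises: all the ratios stay finite, and the limiting equation is valid for every value of $\Psi$. Moreover, the case $\Psi=0$ is then settled for free, since the equation collapses to $0=b_n$, contradicting the standing hypothesis $b_n\neq0$; thus $\Psi\neq0$ automatically and $\Psi$ is a zero of $P$ in all cases. I expect the main care in the full write-up to lie in cleanly justifying the telescoping-product limits for each $j$ and in tracking the index range on which all terms are guaranteed nonzero.
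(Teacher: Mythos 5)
Your proof is correct, and it reaches the characteristic equation by a more elementary route than the paper. The paper packages the same underlying computation as a dynamical-systems argument: it defines the continuous map $f$ in \eqref{23} on $(\mathbb C^*)^n$ that advances the vector of $n$ consecutive ratios, notes that this vector converges to $(\Psi,\dots,\Psi)$, invokes continuity to conclude that $(\Psi,\dots,\Psi)$ is a fixed point, and extracts $P(\Psi)=0$ from the last coordinate of \eqref{28}. You instead divide the recurrence \eqref{11} by the oldest participating term and pass to the limit directly via telescoping products $F^{\bf a}_{k-j}/F^{\bf a}_{k-n}\to\Psi^{\,n-j}$; no fixed-point reasoning or auxiliary map is needed. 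Your normalization also buys something concrete: the paper's map $f$ divides by the coordinates $z_2,\dots,z_n$ and its formula \eqref{28} involves $\Psi^{-1},\dots,\Psi^{1-n}$, so the fixed-point argument tacitly requires $\Psi\neq0$ (the limit vector $(0,\dots,0)$ does not even lie in the domain $(\mathbb C^*)^n$), a case the paper never addresses. Your version keeps every quotient finite for all $\Psi$, and the putative case $\Psi=0$ is eliminated for free by the contradiction $0=b_n$. One small point of bookkeeping to make explicit in a full write-up: the telescoping requires all of $F^{\bf a}_{k-n},\dots,F^{\bf a}_{k}$ to be nonzero, which your restriction to $k>k_0+n$ already guarantees, and each product has a fixed number $n-j$ of factors, so the product rule for limits applies without further argument.
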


\begin{proof}

If $n=1$, $b_1$ is the zero of the characteristic monomial, and we have $(F^{\bf a}_k)_{k=0}^\infty=a_1\cdot b_1^k$. 

\smallskip If $n>1$, we introduce a continuous mapping $f:(\mathbb C^*)^n\to(\mathbb C^*)^n$ defined as

\begin{equation}
f(z_1,\dots,z_n)=\biggr(z_2,\dots,z_n, b_1+\frac{b_2}{z_{n}}+\frac{b_3}{z_{n-1}z_{n}}+\dots +\frac{b_n}{ z_2\dots z_{n}}\biggr).\label{23}
\end{equation}

\begin{equation}
f\biggr(\frac{F_{k_0+2}}{F_{k_0+1}},\dots,\frac{F_{k_0+n+1}}{F_{k_0+n}}\biggr)\!=\biggr(\frac{F_{k_0+3}}{F_{k_0+2}},\dots,\frac{F_{k_0+n+1}}{F_{k_0+n}},\frac{b_1F_{k_0+n+1}+b_2F_{k_0+n} +\dots + b_n F_{k_0+2}}{F_{k_0+n+1}}\biggr).\label{24}
\end{equation}

\medskip\noindent It follows from formula \eqref{11} with $k=k_0+n+2$, and equation \eqref{24} that

\begin{equation}
f\biggr(\frac{F_{k_0+2}}{F_{k_0+1}},\dots,\frac{F_{k_0+n+1}}{F_{k_0+n}}\biggr)=\biggr(\frac{F_{k_0+3}}{F_{k_0+2}},\dots,\frac{F_{k_0+n+2}}{F_{k_0+n+1}}\biggr).\label{25}
\end{equation}
 
\bigskip Our assumption \eqref{22} and equation \eqref{25} imply that iterations of $f$ create a sequence convergent to the vector $(\Psi,\dots,\Psi)$. Since $f$ is continuous, it means that the vector $(\Psi,\dots,\Psi)$ is a fixed point of $f$, \cite[p.227]{Maurin}, i.e.,  
\begin{equation}
f(\Psi,\dots,\Psi)=(\Psi,\dots,\Psi).\label{26}
\end{equation}

\medskip\noindent  On the other hand, from the continuity of $f\!$, equation \eqref{24}, and the fact that due to \eqref{22} 

\begin{equation}
 \lim_{k_0\to\infty}\frac{F_{k_0+i+1}}{F_{k_0+n+1}}= \Psi^{-n+i}\!,\quad i=1,\dots,n,\label{27}
\end{equation}
we obtain that  

\begin{equation}
f(\Psi,\dots,\Psi)=\big(\Psi,\dots,\Psi,b_1+b_2\Psi^{-1}+\dots+b_n\Psi^{1-n}).\label{28}
\end{equation}

\bigskip \noindent Equations \eqref{26} and \eqref{28} imply that 
$\,\Psi^{n}-b_1\Psi^{n-1}-\cdots-b_{n}=0$.

\end{proof}

\bigskip Let the characteristic polynomial $P$ of a linear recurrence $F$ have $\nu$ distinct zeros. For simplicity of the notation, we label them as $\lambda_i$, $i=1,\dots,\nu$. Let $\mu_i$ denote the multiplicity of the zero $\lambda_i$, i.e., $\sum_{i=1}^\nu\mu_i=n$. 

\medskip Any solution $F^{\bf a}$ of $F$ is a linear combination of the following $n$ {\em basic solutions} of $F$ \cite{Jeske, Kelly}:
\begin{equation}
(k^j\lambda^k_i)_{k=-n+1}^\infty, \quad i=1,\dots, \nu,\quad j=0,\dots, \mu_i-1,\label{29}
\end{equation}

\bigskip\noindent  So, for $k\geq-n+1$, we have
\begin{equation}
  F_k^{\bf a}=\sum_{i=1}^\nu\sum_{j=0}^{\mu_i-1}c_{ij}^{\bf a}k^j\lambda^k_i.\label{30}
\end{equation}
 
The coefficients $(c^{\bf a}_{1,0},\dots,c^{\bf a}_{\nu,\mu_{\nu}-1})={\bf c^{\bf a}}$ are solutions of the system of linear equations 
\begin{equation}
{\bf c^{\bf a}}=C^{-1}{\bf a}, \label{31}
\end{equation}
where columns of matrix $C$ consists of linearly independent vectors built from the initial conditions of basic solutions \eqref{29}, i.e., 

\[C=\left[
\begin{array}{ccccccc}
1               &\cdots&0                               &\cdots&1                   &\cdots&0                                   \\
\lambda_1^{-1}  &\cdots&(-1)^{\mu_1-1}\lambda_1^{-1}    &\cdots&\lambda_{\nu}^{-1}  &\cdots&(-1)^{\mu_\nu-1}\lambda_{\nu}^{-1}    \\
\lambda_1^{-2}  &\cdots&(-2)^{\mu_1-1}\lambda_1^{-2}    &\cdots&\lambda_{\nu}^{-2}  &\cdots&(-2)^{\mu_\nu-1}\lambda_{\nu}^{-2}    \\
\vdots          &\ddots&\vdots                          &\ddots&\vdots              &\ddots&\vdots                              \\
\lambda_1^{-n+1}&\cdots&(-n+1)^{\mu_1-1}\lambda_1^{-n+1}&\cdots&\lambda_{\nu}^{-n+1}&\cdots&(-n+1)^{\mu_\nu-1}\lambda_{\nu}^{-n+1}\\
\end{array}
\right].\]

 \bigskip We define {\em the characteristic polynomial $P^{\bf a}$ of a solution $F^{\bf a}$} as follows:
\begin{itemize}
  \item $P^{\bf a}$ has as its zeros all those zeros $\lambda_i$ of $P$ for which there exists $j$ such that $c_{ij}^{\bf a}\neq 0$; 
  \item The multiplicity of a zero $\lambda_i$ in $P^{\bf a}$ is equal to the largest index $j$ for which $c_{ij}^{\bf a}\neq 0$. 
\end{itemize}

In what follows, we say that a solution $F^{\bf a}$ of a linear recurrence $F$ {\em is associated\,} with the characteristic polynomial $P^{\bf a}$\!. 

\medskip \begin{theorem}\label{B}

Given a solution $F^{\bf a}$ of a linear recurrence $F$. The ratio limit
\begin{equation}
\lim_{k_0<k\to\infty} \frac{F^{\bf a}_{k+1}}{F^{\bf a}_k},\quad{\rm where}\quad F^{\bf a}_k\ne0\quad{\text for}\quad  k>k_0,\label{33}
\end{equation}
exists iff the characteristic polynomial $P^{\bf a}$ of the solution $F^{\bf a}$ is asymptotically simple.   

If the latter is true, then 
\begin{equation}
\lim_{k_0<k\to\infty} \frac{F^{\bf a}_{k+1}}{F^{\bf a}_k}=\lambda_{i_0}, \label{34} 
\end{equation}
where $\lambda_{i_0}$ is the dominant zero of $P^{\bf a}\!$.\footnote{Condition \eqref{14} ensures that the dominant zero $\lambda_{i_0}$ of $P^{\bf a}\neq P$ coincides with the dominant zero $\lambda_0$ of $P$.}

\end{theorem}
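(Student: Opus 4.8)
The plan is to read the asymptotics of $F_k^{\bf a}$ directly off the explicit representation \eqref{30}, discarding all terms with $c_{ij}^{\bf a}=0$ so that only the zeros and powers recorded in $P^{\bf a}$ survive. First I would set $M=\max\{|\lambda_i| : c_{ij}^{\bf a}\neq0 \text{ for some } j\}$ and let $d$ be the largest $j$ occurring (with $c_{ij}^{\bf a}\neq0$) among the zeros of modulus $M$; by the definition of $P^{\bf a}$ and of asymptotic simplicity, the index set $S=\{i : |\lambda_i|=M,\ c_{i,d}^{\bf a}\neq0\}$ is a singleton \emph{exactly} when $P^{\bf a}$ is asymptotically simple, so the whole theorem reduces to the dichotomy $|S|=1$ versus $|S|\ge2$. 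Dividing \eqref{30} by $M^k k^d$ and noting that terms with $|\lambda_i|<M$ decay geometrically while terms of modulus $M$ with $j<d$ carry a factor $k^{j-d}\to0$, I obtain the clean estimate
\begin{equation}
\frac{F_k^{\bf a}}{M^k k^d}=\sum_{i\in S}c_{i,d}^{\bf a}\Big(\frac{\lambda_i}{M}\Big)^{k}+o(1),\qquad k\to\infty. \label{plan1}
\end{equation}

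For the ``if'' direction I would take $|S|=1$, say $S=\{i_0\}$, so the sum in \eqref{plan1} is the single term $c_{i_0,d}^{\bf a}(\lambda_{i_0}/M)^k$ of constant modulus $|c_{i_0,d}^{\bf a}|>0$. Writing $F_k^{\bf a}=\lambda_{i_0}^k k^d\big(c_{i_0,d}^{\bf a}+\delta_k\big)$ with $\delta_k\to0$ shows $F_k^{\bf a}\neq0$ for large $k$ and gives
\[
\frac{F_{k+1}^{\bf a}}{F_k^{\bf a}}=\lambda_{i_0}\cdot\frac{(k+1)^d}{k^d}\cdot\frac{c_{i_0,d}^{\bf a}+\delta_{k+1}}{c_{i_0,d}^{\bf a}+\delta_k}\longrightarrow\lambda_{i_0},
\]
which is \eqref{34}.

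For the ``only if'' direction I would argue by contradiction: suppose $|S|\ge2$ yet the ratio limit $\Psi$ exists. Put $h_k=F_{k+1}^{\bf a}-\Psi F_k^{\bf a}=F_k^{\bf a}\big(F_{k+1}^{\bf a}/F_k^{\bf a}-\Psi\big)$. Since $F_k^{\bf a}/(M^kk^d)$ is bounded by \eqref{plan1} and the parenthesis tends to $0$, we get $h_k/(M^k k^d)\to0$; on the other hand, expanding $h_k$ from \eqref{30} and using $(k+1)^d=k^d+O(k^{d-1})$ yields
\[
\frac{h_k}{M^k k^d}=\sum_{i\in S}c_{i,d}^{\bf a}(\lambda_i-\Psi)\Big(\frac{\lambda_i}{M}\Big)^{k}+o(1).
\]
Thus the finite exponential sum $\sum_{i\in S}c_{i,d}^{\bf a}(\lambda_i-\Psi)(\lambda_i/M)^k$ must converge to $0$. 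The crux of the proof is the linear-independence fact that such a sum over \emph{distinct} unimodular ratios $\lambda_i/M$ can tend to $0$ only if every coefficient vanishes; I would establish it by Cesàro averaging against $(\overline{\lambda_{i'}/M})^{k}$, using $\frac1K\sum_{k\le K}(\lambda_i/M)^k(\overline{\lambda_{i'}/M})^{k}\to\delta_{ii'}$. Hence $c_{i,d}^{\bf a}(\lambda_i-\Psi)=0$, and since $c_{i,d}^{\bf a}\neq0$ this forces $\lambda_i=\Psi$ for every $i\in S$ --- impossible when $S$ contains two distinct zeros. Therefore the ratio limit cannot exist, which completes the dichotomy and, together with the ``if'' direction, identifies the value as the dominant zero $\lambda_{i_0}$.

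I expect the main obstacle to be precisely this last linear-independence step: one must rule out convergence of a genuinely multi-frequency almost-periodic sequence carefully, since the $o(1)$ error and the possible near-vanishing of the leading sum make a naive ``divide the leading part at $k+1$ by the leading part at $k$'' argument unsafe. Routing everything through the numerator quantity $h_k$ rather than through the quotient directly is what keeps the estimate \eqref{plan1} linear and lets the Cesàro extraction go through without dividing by a quantity that may be small.
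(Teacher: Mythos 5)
Your proof is correct, and while your ``if'' direction coincides with the paper's (both read the limit off the normalization $F_k^{\bf a}/(k^{d}\lambda_{i_0}^k)\to c^{\bf a}_{i_0 d}\ne0$, cf.\ \eqref{35}--\eqref{36}), your ``only if'' direction takes a genuinely different and more robust route. The paper isolates the dominant block $D^{\bf a}_k=k^{j_0}\sum_l c^{\bf a}_{lj_0}\lambda_l^k$, passes to the ratio of consecutive terms of $A_k=\sum_l c^{\bf a}_l\gamma_l^k$ with unimodular $\gamma_l$, uses the Cauchy property of that ratio sequence together with the identity $A_{k+2}A_k-A_{k+1}^2=\sum_{l<m}c^{\bf a}_lc^{\bf a}_m(\gamma_l\gamma_m)^k(\gamma_l-\gamma_m)^2$ to conclude that a combination of the oscillating sequences $\big((\gamma_l\gamma_m)^k\big)$ tends to $0$, and then asserts that this forces all coefficients to vanish. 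You instead linearize: $h_k=F^{\bf a}_{k+1}-\Psi F^{\bf a}_k$ satisfies $h_k/(M^kk^d)\to0$ on one hand and equals $\sum_{i\in S}c^{\bf a}_{i,d}(\lambda_i-\Psi)(\lambda_i/M)^k+o(1)$ on the other, and you annihilate the coefficients by Ces\`aro orthogonality of distinct unimodular characters. This buys two things. First, it bypasses the paper's passage \eqref{38}--\eqref{39}, where $E^{\bf a}_{k+1}$ is discarded from a numerator sitting over the denominator $F^{\bf a}_k$, which may be small; your $h_k$ device keeps everything linear and never divides by a possibly small quantity, exactly as you anticipated. Second, your linear-independence step is applied to the genuinely distinct frequencies $\lambda_i/M$, $i\in S$, and is proved by averaging against $(\overline{\lambda_{i'}/M})^{k}$, whereas the paper's analogous ``oscillation implies nonvanishing'' claim is applied to the products $\gamma_l\gamma_m$, which need not be pairwise distinct over distinct pairs $(l,m)$, and is not proved. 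A final cosmetic difference: you never need Theorem \ref{A} (the paper invokes it to identify the limit with a zero $\tilde\lambda$ of $P$), since your contradiction $\lambda_i=\Psi$ for all $i\in S$ works for an arbitrary limit value $\Psi$.
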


\begin{proof}

$\Longleftarrow$ Let $\lambda_{i_0}$ be the dominant zero with the multiplicity $j_0$ of the asymptotically simple polynomial $P^{\bf a}\!$. Then, we have

\begin{equation}
\lim_{k\to\infty}\frac{F_k^{\bf a}}{k^{j_0}\lambda_{i_0}^k}=c_{{i_0}{j_0}}.\label{35}
\end{equation}
Formula \eqref{35} implies that

\begin{equation}
\lim_{k_0<k\to\infty}\frac{F_{k+1}^{\bf a}}{F_{k}^{\bf a}}=\lambda_{i_0}\cdot\lim_{k_0<k\to\infty}\biggr(\frac{F_{k+1}^{\bf a}}{(k+1)^{j_0}\lambda_{i_0}^{k+1}}\cdot\frac{k^{j_0}\lambda_{i_0}^k}{F_k^{\bf a}}\biggr)=\lambda_{i_0}.\label{36}
\end{equation}

\bigskip $\Longrightarrow$ Let us assume that the ratio limit exists and that the characteristic polynomial $P^{\bf a}$ of a solution $F^{\bf a}$ is not asymptotically simple. Then $P^{\bf a}$ has $\eta>1$ {\em distinct\,} zeros, say $\lambda_1,\dots,\lambda_{\eta}$, such that

\begin{enumerate}[{(i)}]
\item the modulus $R=|\lambda_1|=\dots=|\lambda_\eta|\,$ is greater than or equal to the moduli of other zeros of the polynomial $P^{\bf a}$; and 
\item there exist nonzero coefficients $c^{\bf a}_{1j_0},\dots,c^{\bf a}_{\eta j_0}$ with the index $j_0$ greater than all indices $j$ corresponding to zeros of $P^{\bf a}$ with the same modulus as $R$. 
\end{enumerate}

We decompose each sequence element $F^{\bf a}_k$ given by formula \eqref{30} into a part $D^{\bf a}_k$ containing linear combinations of the basic solutions \eqref{29} with the dominant moduli equal to $|k^{j_0}R^k|$, and a part $E^{\bf a}_k$ containing linear combinations of the basic solutions with moduli smaller than $|k^{j_0}R^k|$. Thus, for any $k\ge -n+1$, we have $F^{\bf a}_k=D^{\bf a}_k+E^{\bf a}_k$, where
\begin{equation} 
D^{\bf a}_k=k^{j_{0}}\sum_{l=1}^\eta c^{\bf a}_{lj_{0}}\lambda^k_l. \label{37}
\end{equation}

According to Theorem \ref{A}, if limit \eqref{33} exists, it is equal to a zero of the characteristic polynomial $P\!$, say $\tilde\lambda$. So, we obtain that 
\begin{equation}
\tilde\lambda=\lim_{k_0<k\to\infty}\frac{F^{\bf a}_{k+1}}{F^{\bf a}_k}=\lim_{k_0<k\to\infty}\frac{D^{\bf a}_{k+1}+E^{\bf a}_{k+1}}{D^{\bf a}_k+E^{\bf a}_k}=\lim_{k_0<k\to\infty}\frac{D^{\bf a}_{k+1}}{D^{\bf a}_k+E^{\bf a}_k}.\label{38}
\end{equation}
Formula \eqref{38} implies that 
\begin{equation}
\tilde\lambda^{-1}=\lim_{k_0<k\to\infty}\frac{D^{\bf a}_k+E^{\bf a}_k}{D^{\bf a}_{k+1}}=\lim_{k_0<k\to\infty}\frac{D^{\bf a}_k}{D^{\bf a}_{k+1}}.\label{39}
\end{equation}
It follows from \eqref{37} and \eqref{39} that 
\begin{equation} 
\lim_{k_0<k\to\infty}\biggr(\frac{\sum_{l=1}^\eta c^{\bf a}_{lj_0}(\lambda_l/R)^{k+1}}{\sum_{l=1}^\eta c^{\bf a}_{lj_0}(\lambda_l/R)^k}\biggr)= \tilde\lambda/R.\label{40}
\end{equation}

To simplify the notation, let us set $c^{\bf a}_l=c^{\bf a}_{lj_0}$, and let us introduce normalized zeros $\gamma_l=\lambda_l/R$, i.e., $|\gamma_l|=1$, $l=1,\dots,\eta$. 
Since limit \eqref{40} exists, the sequence 

\begin{equation} 
\biggr(\frac{\sum_{l=1}^\eta c^{\bf a}_l\gamma_l^{k+1}}{\sum_{l=1}^\eta c^{\bf a}_l\gamma_l^{k}}\biggr)^\infty_{k=k_0+1}\label{41}
\end{equation}
is a Cauchy sequence. Thus, for any $\epsilon>0$, there exist $k_{\epsilon}$ such that for $k>k_{\epsilon}$

\begin{equation} 
\biggr|\frac{\sum_{l=1}^\eta c^{\bf a}_l\gamma_l^{k+2}}{\sum_{l=1}^\eta c^{\bf a}_l\gamma_l^{k+1}}-\frac{\sum_{l=1}^\eta c^{\bf a}_l\gamma_l^{k+1}}{\sum_{l=1}^\eta c^{\bf a}_l\gamma_l^{k}}\biggr|<\epsilon. \label{42}
\end{equation} 
We transform inequality \eqref{42} into

\begin{equation} 
\biggr|\frac{\sum_{l=1}^\eta c^{\bf a}_l\gamma_l^{k+2}\sum_{l=1}^\eta c^{\bf a}_l\gamma_l^k-\big(\sum_{l=1}^\eta c^{\bf a}_l\gamma_l^{k+1}\big)^2}{\sum_{l=1}^\eta c^{\bf a}_l\gamma_l^{k+1}\sum_{l=1}^\eta c^{\bf a}_l\gamma_l^k}\biggr|=\label{43}
\end{equation}

\begin{equation}
=\biggr|\frac{\sum_{l=1}^\eta\sum_{m=l+1}^\eta c^{\bf a}_lc^{\bf a}_m\gamma^k_l\gamma^k_m(\gamma_l-\gamma_m)^2}{\sum_{l=1}^\eta c^{\bf a}_l\gamma_l^{k+1}\sum_{l=1}^\eta c^{\bf a}_l\gamma_l^k}\biggr|<\epsilon.\label{44}
\end{equation}
It follows from inequality \eqref{44} that the sequence 

\begin{equation} 
\biggr(\frac{\sum_{l=1}^\eta\sum_{m=l+1}^\eta c^{\bf a}_lc^{\bf a}_m\gamma^k_l\gamma^k_m(\gamma_l-\gamma_m)^2}{\sum_{l=1}^\eta c^{\bf a}_l\gamma_l^{k+1}\sum_{l=1}^\eta c^{\bf a}_l\gamma_l^k}\biggr)^\infty_{k=k_0+1}\label{45}
\end{equation}
must converge to 0. 

\smallskip The denominators in sequence \eqref{45} are bounded from above due to the fact that the moduli $|\gamma_l|=1$, $l=1,\dots,\eta$. Thus, the numerators of this sequence must form a sequence converging to 0.  
However, the sequences $(\gamma^k_l\gamma^k_m)_{k=k_0+1}^\infty$ oscillate for each pair of indices $(l,m)$, and therefore their linear combination does not converge to 0. Consequently, sequence \eqref{45} converges to 0 only when it is the constant sequence $(0)^\infty_{k=k_0+1}$, i.e., all normalized zeros $\gamma_l$ are equal one to another. 

So, if the ratio limit \eqref{33} exists, there can be only one zero that satisfies conditions (i) and (ii) listed above. The latter  contradicts our assumption that the characteristic polynomial $P^{\bf a}$ of the solution $F^{\bf a}$ is not asymptotically simple.

\end{proof}

\noindent MSC2010: 40A05, 40A30

\end{document}